
\documentclass[12pt,fleqn]{article}

\usepackage{amsmath,amssymb,amsthm}
\usepackage{hyperref}

\newcommand{\as}{\ins{as}}
\newcommand{\bgset}[1]{\big\{#1\big\}}
\newcommand{\calG}{{\cal G}}
\newcommand{\dint}{\ds{\int}}
\newcommand{\ds}[1]{\displaystyle #1}
\newcommand{\dualp}[3][]{\left(#2,#3\right)_{#1}}
\newcommand{\half}{\frac{1}{2}}
\newcommand{\hquad}{\hspace{0.08in}}
\newcommand{\ins}[1]{\hquad \text{#1} \hquad}
\newcommand{\ip}[3][]{\left(#2,#3\right)_{#1}}
\newcommand{\isom}{\approx}
\newcommand{\norm}[2][]{\left\|#2\right\|_{#1}}
\renewcommand{\o}{\text{o}}
\newcommand{\PS}[1]{$(\text{PS})_{#1}$}
\newcommand{\QED}{\mbox{\qedhere}}
\newcommand{\R}{\mathbb R}
\newcommand{\restr}[2]{\left.#1\right|_{#2}}
\newcommand{\seq}[1]{\left(#1\right)}
\newcommand{\set}[1]{\left\{#1\right\}}
\newcommand{\sip}[3][]{(#2,#3)_{#1}}
\newcommand{\unifae}{\ins{uniformly a.e.}}

\newtheorem{corollary}{Corollary}[section]
\newtheorem{lemma}[corollary]{Lemma}
\newtheorem{theorem}[corollary]{Theorem}

\theoremstyle{remark}
\newtheorem{example}[corollary]{Example}

\numberwithin{equation}{section}

\title{\bf Some Results for Impulsive Problems via Morse Theory\thanks{{\em MSC2010:} Primary 34B37, Secondary 58E05
\newline \smallskip \indent\; {\em Key Words and Phrases:} impulsive problems, asymptotically piecewise linear problems, resonance set, nontrivial solutions, Morse theory, critical groups, saddle point theorem}
}
\author{\bf Ravi P. Agarwal\\
Department of Mathematics\\
Texas A\&M University\\
Kingsville, TX 78363-8202, USA\\
\em agarwal@tamuk.edu\\
[\medskipamount]
\bf T. Gnana Bhaskar and Kanishka Perera\\
Department of Mathematical Sciences\\
Florida Institute of Technology\\
Melbourne, FL 32901, USA\\
\em gtenali@fit.edu \& kperera@fit.edu}
\date{}

\begin{document}

\maketitle

\begin{abstract}
We use Morse theory to study impulsive problems. First we consider asymptotically piecewise linear problems with superlinear impulses, and prove a new existence result for this class of problems using the saddle point theorem. Next we compute the critical groups at zero when the impulses are asymptotically linear near zero, in particular, we identify an important resonance set for this problem. As an application, we finally obtain a nontrivial solution for asymptotically piecewise linear problems with impulses that are asymptotically linear at zero and superlinear at infinity. Our results here are based on the simple observation that the underlying Sobolev space naturally splits into a certain finite dimensional subspace where all the impulses take place and its orthogonal complement that is free of impulsive effects.
\end{abstract}

\section{Introduction}

Impulsive problems arise naturally in studies of evolutionary processes that involve abrupt changes in the state of the system, triggered by instantaneous perturbations called impulses. Examples include games where players can affect the game only at discrete instants (see Chikrii, Matychyn, and Chikrii \cite{MR2341237}), two person zero sum games with separated impulsive dynamics (see Cr{\"u}ck, Quincampoix, and Saint-Pierre \cite{MR2341245}), pulse vaccination strategy (see Stone, Shulgin, and Agur \cite{MR1756756}), and optimal impulsive harvesting (see Zhang, Shuai, and Wang \cite{MR1962293}). Classical approaches to such problems include fixed point theory (see, e.g., Lin and Jiang \cite{MR2241134}) and the method of upper and lower solutions (see, e.g., Liu and Guo \cite{MR1487265}). More recently, variational methods have been widely used to study impulsive problems (see, e.g., Tian and Ge \cite{MR2465922}, Nieto and O'Regan \cite{MR2474254}, Zhou and Li \cite{MR2532812}, Zhang and Yuan \cite{MR2570535}, Zhang and Li \cite{MR2570525}, Bai and Dai \cite{MR2782888}, Han and Wang \cite{MR2781070}, and Gong, Zhang, and Tang \cite{MR2830975}).

In this paper we use Morse theory to study impulsive problems. First we consider asymptotically piecewise linear problems with superlinear impulses. Although asymptotically piecewise linear nonlinearities are quite natural in this setting, they do not seem to have been studied in the literature. We will prove a new existence result for this class of problems using the saddle point theorem. Next we compute the critical groups at zero when the impulses are asymptotically linear near zero. In particular, we will identify an important resonance set for this problem. The effect of impulses on critical groups has not been studied previously, to the best of our knowledge. As an application, we finally obtain a nontrivial solution for asymptotically piecewise linear problems with impulses that are asymptotically linear at zero and superlinear at infinity. Our results here are based on the simple observation that the underlying Sobolev space naturally splits into a certain finite dimensional subspace where all the impulses take place and its orthogonal complement that is free of impulsive effects.

Let $m$ be a positive integer, let $0 = x_0 < x_1 < \cdots < x_m < x_{m+1} = 1$, and consider the impulsive problem
\begin{equation} \label{1.1}
    \left\{\begin{aligned}
    - u'' & = f(x,u), \quad x \in (0,1) \setminus \set{x_1,\dots,x_m}\\[10pt]
    u(0) & = u(1) = 0, \qquad u(x_j^+) = u(x_j^-), \quad j = 1,\dots,m\\[10pt]
    u'(x_j^+) & = u'(x_j^-) - \imath_j(u(x_j)), \quad j = 1,\dots,m,
    \end{aligned}\right.
\end{equation}
where $f$ is a Carath\'{e}odory function on $(0,1) \times \R$,
\begin{equation*}
    u(x_j^\pm) = \lim_{\stackrel{x \to x_j}{x \gtrless x_j}} u(x), \qquad u'(x_j^\pm) = \lim_{\stackrel{x \to x_j}{x \gtrless x_j}} u'(x),
\end{equation*}
and $\imath_j$ are continuous functions on $\R$. Denoting by $H^1_0(0,1)$ the usual Sobolev space with the inner product
\begin{equation*}
    \ip{u}{v} = \int_0^1 u' v',
\end{equation*}
a weak solution of \eqref{1.1} is a function $u \in H^1_0(0,1)$ such that
\begin{equation*}
    \int_0^1 u' v' = \int_0^1 f(x,u)\, v + \sum_{j=1}^m \imath_j(u(x_j))\, v(x_j) \quad \forall v \in H^1_0(0,1).
\end{equation*}
Noting that $H^1_0(0,1)$ is continuously imbedded in $C[0,1]$, we see that weak solutions coincide with the critical points of the $C^1$-functional
\begin{equation*}
    \Phi(u) = \half \int_0^1 (u')^2 - \int_0^1 F(x,u) - \sum_{j=1}^m I_j(u(x_j)), \quad u \in H = H^1_0(0,1),
\end{equation*}
where
\begin{equation*}
    F(x,t) = \int_0^t f(x,s)\, ds, \qquad I_j(t) = \int_0^t \imath_j(s)\, ds
\end{equation*}
are the primitives of $f$ and $\imath_j$, respectively.

The closed linear subspace
\begin{equation*}
    N = \bgset{u \in H : u(x_j) = 0,\, j = 1,\dots,m}
\end{equation*}
is important here since each $I_j(0) = 0$. For $j = 1,\dots,m$, the mapping $H \to \R,\, u \mapsto u(x_j)$ is a bounded linear functional on $H$ and hence there is a unique $w_j \in H$ such that $u(x_j) = \ip{u}{w_j}$ by the Riesz-Frechet representation theorem. In fact,
\begin{equation} \label{1.2}
    w_j(x) = \begin{cases}
    (1 - x_j)\, x, & 0 \le x \le x_j\\[10pt]
    x_j\, (1 - x), & x_j \le x \le 1.
    \end{cases}
\end{equation}
Since $x_j$ are distinct, $w_j$ are linearly independent, so $N$ is the orthogonal complement of the $m$-dimensional subspace $M$ that they span. Hence we have the orthogonal decomposition
\begin{equation*}
    H = N \oplus M, \quad u = v + w,
\end{equation*}
and
\begin{equation} \label{1.3}
    \Phi(u) = \half \int_0^1 \left((v')^2 + (w')^2\right) - \int_0^1 F(x,u) - \sum_{j=1}^m I_j(w(x_j)).
\end{equation}
We will make use of this splitting throughout the paper.

By \eqref{1.2}, each $w \in M$ is affine on the subintervals $[x_{j-1},x_j]$. Since the space of continuous functions on $[0,1]$ that are affine on these subintervals and vanish at the endpoints is also $m$-dimensional, it follows that $M$ is precisely this subspace. Then we also have
\begin{equation*}
    \max_{x \in [0,1]}\, |w(x)| = \max_{j = 1,\dots,m}\, |w(x_j)| \quad \forall w \in M,
\end{equation*}
and this is an equivalent norm on this finite dimensional space.

The subspace $N$ has the decomposition
\begin{equation*}
    N = \bigoplus_{j=1}^{m+1} N_j, \quad v = \sum_{j=1}^{m+1} v_j
\end{equation*}
where $N_j = H^1_0(x_{j-1},x_j)$, $v_j = \chi_j\, v$, and
\begin{equation*}
    \chi_j(x) = \begin{cases}
    1, & x \in (x_{j-1},x_j)\\[10pt]
    0, & x \in (0,1) \setminus (x_{j-1},x_j)
    \end{cases}
\end{equation*}
is the characteristic function of the subinterval $(x_{j-1},x_j)$. Combining this with \eqref{1.3} gives
\begin{equation} \label{1.4}
    \Phi(u) = \half \left[\sum_{j=1}^{m+1} \int_{x_{j-1}}^{x_j} (v_j')^2 + \int_0^1 (w')^2\right] - \int_0^1 F(x,u) - \sum_{j=1}^m I_j(w(x_j)).
\end{equation}
We will make use of this splitting in the next section.

\section{Asymptotically Piecewise Linear Problems with Superlinear Impulses}

In this section we assume that $f$ is asymptotically piecewise linear in the sense that
\begin{equation} \label{2.1}
    f(x,t) = \sum_{j=1}^{m+1} a_j\, \chi_j(x)\, t + g(x,t)
\end{equation}
where $a_1,\dots,a_{m+1} \in \R$ and $g$ satisfies
\begin{equation} \label{2.2}
    |g(x,t)| \le C \left(|t|^{r-1} + 1\right) \quad \text{for a.e. } x \in (0,1) \text{ and all } t \in \R
\end{equation}
for some $r \in (1,2)$ and a generic positive constant $C$. For the sake of simplicity we will only consider the nonresonant case where, for all $j$, $a_j$ is not in the set
\begin{equation*}
    \sigma_j = \set{\lambda^j_k = \frac{k^2 \pi^2}{(x_j - x_{j-1})^2} : k = 1,2,\dots}
\end{equation*}
of eigenvalues of the problem
\begin{equation*}
    \left\{\begin{aligned}
    - u'' & = \lambda\, u, \quad x \in (x_{j-1},x_j)\\[10pt]
    u(x_{j-1}) & = u(x_j) = 0.
    \end{aligned}\right.
\end{equation*}
Regarding the impulses we assume the superlinearity conditions
\begin{equation} \label{2.3}
    t\, \imath_j(t) \ge c\, |t|^\mu - C \quad \forall t \in \R,\, j = 1,\dots,m
\end{equation}
for some $\mu > 2$ and $c > 0$. The main result of this section is

\begin{theorem} \label{Theorem 2.1}
If \eqref{2.1} -- \eqref{2.3} hold, and $a_j \notin \sigma_j$ for $j = 1,\dots,m+1$, then problem \eqref{1.1} has a solution.
\end{theorem}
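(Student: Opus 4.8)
The plan is to apply the saddle point theorem to $\Phi$ using the orthogonal splitting dictated by the nonresonance condition. Writing $F(x,t) = \half \sum_{j=1}^{m+1} a_j\, \chi_j(x)\, t^2 + G(x,t)$ with $G(x,t) = \int_0^t g(x,s)\, ds$, so that $|G(x,t)| \le C(|t|^r + 1)$ by \eqref{2.2}, I would recast $\Phi$ as
\[
    \Phi(u) = \half\, B(u,u) - \int_0^1 G(x,u) - \sum_{j=1}^m I_j(w(x_j)),
\]
where $B(u,u) = \int_0^1 (u')^2 - \sum_{j=1}^{m+1} a_j \int_{x_{j-1}}^{x_j} u^2$ and $u(x_j) = w(x_j)$ because $v \in N$. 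On $N = \bigoplus_j N_j$ the form $B$ splits as $\sum_j B_j$ with $B_j$ acting on $N_j = H^1_0(x_{j-1},x_j)$, and since $a_j \notin \sigma_j$ each $B_j$ is nondegenerate with a finite dimensional negative eigenspace $N_j^-$ and a positive eigenspace $N_j^+$ on which $B_j \ge \delta\, \norm{\cdot}^2$ for some $\delta > 0$. Setting $N^\pm = \bigoplus_j N_j^\pm$, I would take $H^+ = N^+$ and the finite dimensional subspace $H^- = N^- \oplus M$, so that $H = H^- \oplus H^+$.

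For the linking geometry, note first that $N^+ \subseteq N$ carries no impulse terms, so on $H^+$ one has $\Phi(u) \ge \frac{\delta}{2}\,\norm{u}^2 - C(\norm{u}^r + 1)$, which is bounded below since $r < 2$; this gives $\restr{\Phi}{H^+} \ge \alpha$ for some $\alpha \in \R$. On $H^-$ I would first record that \eqref{2.3} integrates to a lower bound $I_j(t) \ge c\, |t|^\mu - C$, whence $\sum_j I_j(w(x_j)) \ge c\, \norm{w}^\mu - C$ using the equivalent norm $\max_j |w(x_j)|$ on $M$. Writing $u = v^- + w$ and expanding $B(u,u) = B(v^-,v^-) + 2 B(v^-,w) + B(w,w)$, the first term is $\le -\delta\, \norm{v^-}^2$, the cross and $M$ terms are bounded by $C(\norm{v^-}\norm{w} + \norm{w}^2)$, and $-\sum_j I_j(w(x_j)) \le -c\,\norm{w}^\mu + C$. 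Since $\mu > 2 > r$ and the negative definite $v^-$ term absorbs the cross term via Young's inequality, $\Phi(u) \to -\infty$ as $\norm{u} \to \infty$ on the finite dimensional $H^-$. Hence $\sup_{\partial D} \Phi < \alpha$ on the boundary of a sufficiently large ball $D$ in $H^-$, and the saddle point theorem produces a critical value $c \ge \alpha$, i.e. a weak solution of \eqref{1.1}.

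The main obstacle is the \PS{} condition, which I expect to be the technical heart. Given a sequence with $\Phi(u_n)$ bounded and $\Phi'(u_n) \to 0$, write $u_n = v_n^- + v_n^+ + w_n$ with $v_n^\pm \in N^\pm$ and $w_n \in M$. Testing $\Phi'(u_n)$ against the components $v_n^+$ and $v_n^-$ (which annihilate the impulse terms since $v_n^\pm \in N$) and using the definiteness of the $B_j$, the $B$--orthogonality of $N^-$ and $N^+$, the cross term bound $|B(w_n, v_n^\pm)| \le C\,\norm{w_n}\norm{v_n^\pm}$, and the sublinear estimate $|\int_0^1 g(x,u_n)\, v_n^\pm| \le C(\norm{u_n}^{r-1} + 1)\,\norm{v_n^\pm}$, I would obtain $\norm{v_n^\pm} \le C(1 + \norm{w_n} + \norm{u_n}^{r-1})$. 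Since $r - 1 < 1$, absorbing the sublinear term yields $\norm{v_n} \le C(1 + \norm{w_n})$ and hence $\norm{u_n} \le C(1 + \norm{w_n})$. The superlinearity then closes the argument: every term of $\Phi(u_n)$ other than the impulses is bounded by $C(1 + \norm{w_n}^2)$, so boundedness of $\Phi(u_n)$ forces $c\,\norm{w_n}^\mu \le C(1 + \norm{w_n}^2)$, and $\mu > 2$ bounds $\norm{w_n}$, hence $\norm{u_n}$.

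Finally, boundedness upgrades to a convergent subsequence in the standard way: writing $\Phi'(u) = u - \Psi'(u)$ where $\Psi'$ collects the lower order terms coming from $\half\sum_j a_j \int_{x_{j-1}}^{x_j} u^2$, $\int_0^1 G(x,u)$, and $\sum_j I_j(u(x_j))$, the compactness of the imbeddings $H \hookrightarrow L^2(0,1)$ and $H \hookrightarrow C[0,1]$ makes $\Psi'$ a compact operator, so $u_n = \Phi'(u_n) + \Psi'(u_n)$ has a convergent subsequence. This verifies \PS{} and completes the application of the saddle point theorem. I anticipate the bookkeeping of the $N$--$M$ cross terms in $B$ to be the only delicate point, the remaining estimates being routine consequences of \eqref{2.2}, \eqref{2.3}, and the nonresonance $a_j \notin \sigma_j$.
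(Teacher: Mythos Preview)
Your proposal is correct and follows essentially the same strategy as the paper: the identical splitting $H = (N^- \oplus M) \oplus N^+$, the same saddle point geometry (anticoercivity on the finite dimensional piece via \eqref{2.3} and the equivalent norm $\max_j |w(x_j)|$ on $M$, boundedness below on $N^+$ via nonresonance and $r<2$), and the same compactness upgrade at the end. The only minor variation is in the \PS{} verification: the paper tests $\Phi'(u_n)$ against the single vector $\bar u_n = v_n^+ - v_n^- - w_n$ and obtains boundedness from $\Phi'(u_n)\to 0$ alone, whereas you test against $v_n^\pm$ separately and then close with the energy bound $|\Phi(u_n)|\le C$; both arguments are valid.
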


By \eqref{2.1} and \eqref{2.2},
\begin{equation*}
    F(x,t) = \sum_{j=1}^{m+1} \half\, a_j\, \chi_j(x)\, t^2 + G(x,t)
\end{equation*}
where $G(x,t) = \dint_0^t g(x,s)\, ds$ satisfies
\begin{equation} \label{2.4}
    |G(x,t)| \le C \left(|t|^r + 1\right) \quad \text{for a.e. } x \in (0,1) \text{ and all } t \in \R.
\end{equation}
Combining this with \eqref{1.4} gives
\begin{eqnarray*}
    \Phi(u) & = & \half\, \Bigg[\sum_{j=1}^{m+1} \int_{x_{j-1}}^{x_j} \left((v_j')^2 - a_j\, v_j^2\right) + \int_0^1 (w')^2 - \sum_{j=1}^{m+1} a_j \int_{x_{j-1}}^{x_j} w^2\Bigg]\\[10pt]
    & & - \sum_{j=1}^{m+1} a_j \int_{x_{j-1}}^{x_j} v_j\, w - \int_0^1 G(x,u) - \sum_{j=1}^m I_j(w(x_j)).
\end{eqnarray*}
By \eqref{2.3},
\begin{equation} \label{2.5}
    I_j(t) \ge \tilde{c}\, |t|^\mu - C \quad \forall t \in \R,\, j = 1,\dots,m
\end{equation}
for some $\tilde{c} > 0$.

Let $J_0$ be the set of those $j$ for which $a_j < \lambda^j_1$ and let $J_1 = \set{1,\dots,m+1} \setminus J_0$. For each $j \in J_1$, $\lambda^j_{d_j} < a_j < \lambda^j_{d_j+1}$ for some $d_j \ge 1$, and we have the decomposition
\begin{equation*}
    N_j = N_j^+ \oplus N_j^-, \quad v_j = v_j^+ + v_j^-
\end{equation*}
where $N_j^-$ is the $d_j$-dimensional subspace spanned by the eigenfunctions of $\lambda^j_1,\dots,\lambda^j_{d_j}$ and $N_j^+$ is its orthogonal complement. Then
\begin{eqnarray*}
    \Phi(u) & = & \half\, \Bigg[\sum_{j \in J_0} \int_{x_{j-1}}^{x_j} \left((v_j')^2 - a_j\, v_j^2\right) + \sum_{j \in J_1} \int_{x_{j-1}}^{x_j} \big(({v_j^+}')^2 - a_j\, (v_j^+)^2\big)\\[10pt]
    & & + \sum_{j \in J_1} \int_{x_{j-1}}^{x_j} \big(({v_j^-}')^2 - a_j\, (v_j^-)^2\big) + \int_0^1 (w')^2 - \sum_{j=1}^{m+1} a_j \int_{x_{j-1}}^{x_j} w^2\Bigg]\\[10pt]
    & & - \sum_{j=1}^{m+1} a_j \int_{x_{j-1}}^{x_j} v_j\, w - \int_0^1 G(x,u) - \sum_{j=1}^m I_j(w(x_j))
\end{eqnarray*}
for
\begin{equation} \label{2.6}
    u = \sum_{j \in J_0} v_j + \sum_{j \in J_1}\, (v_j^+ + v_j^-) + w \in \bigoplus_{j \in J_0} N_j \oplus \bigoplus_{j \in J_1}\, (N_j^+ \oplus N_j^-) \oplus M.
\end{equation}
We have
\begin{gather*}
    \int_{x_{j-1}}^{x_j} (v_j')^2 \ge \lambda^j_1 \int_{x_{j-1}}^{x_j} v_j^2, \quad j \in J_0,\\[10pt]
    \int_{x_{j-1}}^{x_j} ({v_j^+}')^2 \ge \lambda^j_{d_j+1} \int_{x_{j-1}}^{x_j} (v_j^+)^2, \hquad \int_{x_{j-1}}^{x_j} ({v_j^-}')^2 \le \lambda^j_{d_j} \int_{x_{j-1}}^{x_j} (v_j^-)^2, \quad j \in J_1,
\end{gather*}
so
\begin{gather}
    \int_{x_{j-1}}^{x_j} \left((v_j')^2 - a_j\, v_j^2\right) \ge c_j \norm{v_j}^2, \quad j \in J_0, \label{2.7}\\[10pt]
    \int_{x_{j-1}}^{x_j} \big(({v_j^+}')^2 - a_j\, (v_j^+)^2\big) \ge c_j^+ \norm{v_j^+}^2, \notag\\[10pt]
    \qquad \int_{x_{j-1}}^{x_j} \big(({v_j^-}')^2 - a_j\, (v_j^-)^2\big) \le - c_j^- \norm{v_j^-}^2, \quad j \in J_1 \label{2.8}
\end{gather}
where the constants
\begin{gather*}
c_j = 1 - \frac{\max \set{a_j,0}}{\lambda^j_1}, \quad j \in J_0,\\[10pt]
c_j^+ = 1 - \frac{a_j}{\lambda^j_{d_j+1}}, \hquad c_j^- = \frac{a_j}{\lambda^j_{d_j}} - 1, \quad j \in J_1
\end{gather*}
are all positive.

Recall that $\Phi$ satisfies the Palais-Smale compactness condition \PS{} if every sequence $\seq{u_n}$ in $H$ such that $\seq{\Phi(u_n)}$ is bounded and $\Phi'(u_n) \to 0$, called a \PS{} sequence, has a convergent subsequence.

\begin{lemma} \label{Lemma 2.2}
If \eqref{2.1} -- \eqref{2.3} hold, and $a_j \notin \sigma_j$ for $j = 1,\dots,m+1$, then every sequence $\seq{u_n}$ in $H$ such that $\Phi'(u_n) \to 0$ has a convergent subsequence, in particular, $\Phi$ satisfies the {\em \PS{}} condition.
\end{lemma}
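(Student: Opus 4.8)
The plan is to prove the stronger assertion directly: any sequence $\seq{u_n}$ with $\Phi'(u_n) \to 0$ is bounded and has a strongly convergent subsequence, and the \PS{} condition follows at once since a \PS{} sequence in particular satisfies $\Phi'(u_n) \to 0$. Write $\varepsilon_n = \norm[H^*]{\Phi'(u_n)} \to 0$ and $u_n = v_n + w_n$ with $v_n \in N$, $w_n \in M$, decomposing $v_n = \sum_{j \in J_0} v_{j,n} + \sum_{j \in J_1}(v_{j,n}^+ + v_{j,n}^-)$ as in \eqref{2.6}. The structural fact I will lean on is that $\Phi'(u) = Au - B(u)$, where $A$ is the bounded self-adjoint operator of the quadratic form $\half\int_0^1 (u')^2 - \half \sum_j a_j \int_{x_{j-1}}^{x_j} u^2$ and $B$ is the gradient of $u \mapsto \int_0^1 G(x,u) + \sum_j I_j(w(x_j))$. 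Because $H \hookrightarrow C[0,1]$ compactly in one dimension, weak convergence in $H$ forces uniform convergence; combined with the subcritical growth \eqref{2.2} and continuity of the $\imath_j$, this makes $B$ compact, which is what I will use for the final passage to a strong limit.

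For boundedness I would test $\Phi'(u_n)$ against the pieces of the splitting. Every $v \in N$ has $v(x_j) = 0$, so the impulsive terms vanish and $\langle \Phi'(u_n), v\rangle = \langle u_n, v\rangle - \sum_j a_j \int_{x_{j-1}}^{x_j} u_n\, v - \int_0^1 g(x,u_n)\, v$. Testing against the positive-definite part $v_n^+ = \sum_{J_0} v_{j,n} + \sum_{J_1} v_{j,n}^+$ and against $-v_n^-$, the gap estimates \eqref{2.7}--\eqref{2.8} control the diagonal quadratic contributions from below by $c^+\norm{v_n^+}^2$ and $c^-\norm{v_n^-}^2$; the only survivors are the term coupling $N$ and $M$, bounded by $C\norm{w_n}\norm{v_n^\pm}$, and the sublinear term $\int_0^1 g(x,u_n)\,v_n^\pm$, bounded by $C(\norm{u_n}^{r-1}+1)\norm{v_n^\pm}$ via \eqref{2.2}. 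This gives $\norm{v_n} \le C(\varepsilon_n + \norm{w_n} + \norm{u_n}^{r-1} + 1)$, and since $r - 1 < 1$ a Young-type absorption yields $\norm{u_n}, \norm{v_n} \le C(\norm{w_n} + 1)$.

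It remains to bound $w_n$, and here the superlinearity of the impulses is decisive. Testing against $w_n$, using $u_n(x_j) = w_n(x_j)$ and \eqref{2.3}, produces the dominant negative contribution $-\sum_j \imath_j(w_n(x_j))\, w_n(x_j) \le -c\sum_j |w_n(x_j)|^\mu + C$. Setting $s_n = \max_j |w_n(x_j)|$, which is an equivalent norm on the finite-dimensional $M$, the inequality $\langle \Phi'(u_n), w_n\rangle \ge -\varepsilon_n \norm{w_n}$ rearranges, after inserting $\norm{v_n}, \norm{u_n} \le C(s_n + 1)$ and using norm equivalence on $M$, into $c\, s_n^\mu \le C s_n^2 + C s_n^r + C s_n + C$. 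Since $\mu > 2 > r$, the left side outgrows the right, so $s_n$, and hence $\norm{w_n}, \norm{v_n}, \norm{u_n}$, stays bounded. I expect this balancing of exponents, together with the careful bookkeeping of the $N$–$M$ cross terms that precedes it, to be the main obstacle: the estimate closes only because the impulse exponent $\mu$ strictly exceeds both the quadratic growth on $M$ and the perturbation exponent $r$, so no hypothesis on $\Phi(u_n)$ is even needed.

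For the convergent subsequence I would pass to $u_n \rightharpoonup u$ in $H$, so that $u_n \to u$ uniformly by compact embedding and $w_n \to w$ in the finite-dimensional $M$. Projecting $\Phi'(u_n) \to 0$ onto $N$ and isolating the linear part gives $A_N v_n = P_N \Phi'(u_n) + P_N\big(\text{the } w_n\text{- and } g(\cdot,u_n)\text{-dependent terms}\big)$, where $A_N$ is the operator of the quadratic form restricted to $N$. By \eqref{2.7}--\eqref{2.8} this form is positive definite on the $v_n^+$-subspace and negative definite on the $v_n^-$-subspace with a uniform gap, so $A_N$ is invertible on $N$ with bounded inverse. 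The right-hand side converges strongly, since $P_N\Phi'(u_n) \to 0$, the $w_n$-term converges as $w_n \to w$, and the $g$-term converges by compactness of $B$; hence $v_n = A_N^{-1}(A_N v_n)$ converges strongly and $u_n = v_n + w_n$ converges strongly, as required. Notably this last step uses only invertibility of $A_N$ on $N$, supplied directly by the spectral-gap hypotheses, rather than invertibility of $A$ on all of $H$.
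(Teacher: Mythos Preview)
Your argument is correct and uses the same ingredients as the paper: the spectral gaps \eqref{2.7}--\eqref{2.8} on $N$, the superlinearity \eqref{2.3} of the impulses on $M$, and the exponent balance $\mu > 2 > r$. The only difference is packaging: where you test $\Phi'(u_n)$ separately against $v_n^+$, $-v_n^-$, and $w_n$ and bootstrap $\norm{v_n} \le C(\norm{w_n}+1)$ before closing on $w_n$, the paper combines everything into a single test against $\bar u_n := \sum_{J_0} v_{nj} + \sum_{J_1}(v_{nj}^+ - v_{nj}^-) - w_n$, so that all the gap terms and the impulse term $\sum_j \imath_j(w_n(x_j))\,w_n(x_j)$ appear with the right sign in one inequality, and boundedness drops out in one step using $\norm{\bar u_n} = \norm{u_n}$. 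The paper then defers the passage from bounded to strongly convergent to a ``standard argument'', which you spell out explicitly via the invertibility of $A_N$ and compactness; your version is longer but makes the separate roles of the $N$--gap and the $M$--superlinearity more visible.
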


\begin{proof}
By a standard argument it suffices to show that $\seq{u_n}$ is bounded. Referring to the decomposition \eqref{2.6}, write
\begin{equation*}
    u_n = \sum_{j \in J_0} v_{nj} + \sum_{j \in J_1}\, (v_{nj}^+ + v_{nj}^-) + w_n
\end{equation*}
and set
\begin{equation*}
    \bar{u}_n = \sum_{j \in J_0} v_{nj} + \sum_{j \in J_1}\, (v_{nj}^+ - v_{nj}^-) - w_n.
\end{equation*}
Then
\begin{eqnarray*}
    \dualp{\Phi'(u_n)}{\bar{u}_n} & = & \sum_{j \in J_0} \int_{x_{j-1}}^{x_j} \left((v_{nj}')^2 - a_j\, (v_{nj})^2\right)\\[10pt]
    & & + \sum_{j \in J_1} \int_{x_{j-1}}^{x_j} \left[\big(({v_{nj}^+}')^2 - a_j\, (v_{nj}^+)^2\big) - \big(({v_{nj}^-}')^2 - a_j\, (v_{nj}^-)^2\big)\right]\\[10pt]
    & & - \int_0^1 (w_n')^2 + \sum_{j=1}^{m+1} a_j \int_{x_{j-1}}^{x_j} w_n^2 + 2 \sum_{j \in J_1} a_j \int_{x_{j-1}}^{x_j} v_{nj}^-\, w_n\\[10pt]
    & & - \int_0^1 g(x,u_n)\, \bar{u}_n + \sum_{j=1}^m \imath_j(w_n(x_j))\, w_n(x_j).
\end{eqnarray*}
Since $\Phi'(u_n) \to 0$, this together with \eqref{2.7}, \eqref{2.8}, \eqref{2.2}, and \eqref{2.3} gives
\begin{multline*}
    \sum_{j \in J_0} c_j \norm{v_{nj}}^2 + \sum_{j \in J_1} \big(c_j^+ \norm{v_{nj}^+}^2 + c_j^- \norm{v_{nj}^-}^2\big) + c \sum_{j=1}^m |w_n(x_j)|^\mu\\[10pt]
    \le C \left[\norm{w_n}^2 + \sum_{j \in J_1} \norm{v_{nj}^-} \norm{w_n} + \norm{u_n}^{r-1} \norm{\bar{u}_n} + \norm{\bar{u}_n} + 1\right].
\end{multline*}
Since $\max_j\, |w(x_j)|$ defines an equivalent norm on $M$, $\mu > 2$, $\norm{\bar{u}_n} = \norm{u_n}$, and $r < 2$, boundedness of
\begin{equation*}
    \norm{u_n}^2 = \sum_{j \in J_0} \norm{v_{nj}}^2 + \sum_{j \in J_1} \big(\norm{v_{nj}^+}^2 + \norm{v_{nj}^-}^2\big) + \norm{w_n}^2
\end{equation*}
follows.
\end{proof}

We are now ready to give

\begin{proof}[Proof of Theorem \ref{Theorem 2.1}]
We apply the saddle point theorem to the splitting
\begin{equation*}
    H = \bigg(\bigoplus_{j \in J_1} N_j^- \oplus M\bigg) \oplus \bigg(\bigoplus_{j \in J_0} N_j \oplus \bigoplus_{j \in J_1} N_j^+\bigg) =: H_1 \oplus H_2.
\end{equation*}
By Lemma \ref{Lemma 2.2}, $\Phi$ satisfies the \PS{} condition. For $u = \sum_{j \in J_1} v_j^- + w \in H_1$,
\begin{eqnarray*}
    \Phi(u) & = & \half\, \Bigg[\sum_{j \in J_1} \int_{x_{j-1}}^{x_j} \big(({v_j^-}')^2 - a_j\, (v_j^-)^2\big) + \int_0^1 (w')^2 - \sum_{j=1}^{m+1} a_j \int_{x_{j-1}}^{x_j} w^2\Bigg]\\[10pt]
    & & - \sum_{j \in J_1} a_j \int_{x_{j-1}}^{x_j} v_j^-\, w - \int_0^1 G(x,u) - \sum_{j=1}^m I_j(w(x_j))\\[10pt]
    & \le & - \half \sum_{j \in J_1} c_j^- \norm{v_j^-}^2 - \tilde{c} \sum_{j=1}^m |w(x_j)|^\mu\\[10pt]
    & & + C \left[\norm{w}^2 + \sum_{j \in J_1} \norm{v_j^-} \norm{w} + \norm{u}^r + 1\right]
\end{eqnarray*}
by \eqref{2.8}, \eqref{2.4}, and \eqref{2.5}. Since $\max_j\, |w(x_j)|$ is an equivalent norm on $M$, $\mu > 2$, and $r < 2$, it follows that $\Phi(u) \to - \infty$ as
\begin{equation*}
    \norm{u}^2 = \sum_{j \in J_1} \norm{v_j^-}^2 + \norm{w}^2 \to \infty.
\end{equation*}
On the other hand, for $u = \sum_{j \in J_0} v_j + \sum_{j \in J_1} v_j^+ \in H_2$,
\begin{eqnarray*}
    \Phi(u) & = & \half\, \Bigg[\sum_{j \in J_0} \int_{x_{j-1}}^{x_j} \left((v_j')^2 - a_j\, v_j^2\right) + \sum_{j \in J_1} \int_{x_{j-1}}^{x_j} \big(({v_j^+}')^2 - a_j\, (v_j^+)^2\big)\Bigg]\\[10pt]
    & & - \int_0^1 G(x,u)\\[10pt]
    & \ge & \half \left[\sum_{j \in J_0} c_j \norm{v_j}^2 + \sum_{j \in J_1} c_j^+ \norm{v_j^+}^2\right] - C \left(\norm{u}^r + 1\right)
\end{eqnarray*}
by \eqref{2.7}, \eqref{2.8}, and \eqref{2.4}. Since $r < 2$, it follows that $\Phi$ is bounded from below on $H_2$. Thus, $\Phi$ has a critical point by the saddle point theorem.
\end{proof}

\section{Critical Groups at Zero for Asymptotically Linear Impulses}

Now assume that $f(\cdot,0) = 0$ and $\imath_j(0) = 0,\, j = 1,\dots,m$, so that $u = 0$ is a solution of problem \eqref{1.1}, and recall that the critical groups of $\Phi$ at zero are defined by
\begin{equation} \label{3.1}
    C_q(\Phi,0) = H_q(\Phi^0 \cap U,\Phi^0 \cap U \setminus \set{0}), \quad q \ge 0,
\end{equation}
where $\Phi^0 = \set{u \in H : \Phi(u) \le 0}$, $U$ is any neighborhood of $0$, and $H_\ast(\cdot,\cdot)$ are the relative singular homology groups. In this section we compute them when
\begin{equation} \label{3.2}
    f(x,t) = \o(t) \as t \to 0, \unifae
\end{equation}
and
\begin{equation} \label{3.3}
    \imath_j(t) = b_j\, t + h_j(t), \quad j = 1,\dots,m
\end{equation}
where $b_1,\dots,b_m \in \R$ are such that the asymptotic problem
\begin{equation} \label{3.4}
    \left\{\begin{aligned}
    - u'' & = 0, \quad x \in (0,1) \setminus \set{x_1,\dots,x_m}\\[10pt]
    u(0) & = u(1) = 0, \qquad u(x_j^+) = u(x_j^-), \quad j = 1,\dots,m\\[10pt]
    u'(x_j^+) & = u'(x_j^-) - b_j\, u(x_j), \quad j = 1,\dots,m
    \end{aligned}\right.
\end{equation}
has only the trivial solution and
\begin{equation} \label{3.5}
    h_j(t) = \o(t) \as t \to 0,\, j = 1,\dots,m.
\end{equation}

Let $B$ be the set of those points $b = (b_1,\dots,b_m) \in \R^m$ for which problem \eqref{3.4} has a nontrivial solution. We will call $B$ the resonance set for this problem. Clearly, the solution set of the equations on the first two lines of \eqref{3.4} is precisely the subspace $M$. Since $\set{w_1,\dots,w_m}$, where $w_j$ is given by \eqref{1.2}, is a basis of $M$, it follows that $b \in B$ if and only if there are $c_1,\dots,c_m \in \R$, not all zero, such that $u = \sum_{k=1}^m c_k\, w_k$ satisfies the equations on the third line of \eqref{3.4}. Since $w_k'(x_j^+) - w_k'(x_j^-) = - \delta_{jk}$, where $\delta_{jj} = 1$ and $\delta_{jk} = 0$ for $j \ne k$, this is equivalent to
\begin{equation*}
    \sum_{k=1}^m \big(w_k(x_j)\, b_j - \delta_{jk}\big)\, c_k = 0, \quad j = 1,\dots,m.
\end{equation*}
So
\begin{equation*}
    B = \bgset{b \in \R^m : \det \big(w_k(x_j)\, b_j - \delta_{jk}\big) = 0}.
\end{equation*}
This resonance set will play an important role in what follows.

First we show that the higher-order terms of $\Phi$ can be deformed away without changing the critical groups when $b \not\in B$. Let
\begin{equation*}
    \Phi_0(u) = \half \left[\int_0^1 \left((v')^2 + (w')^2\right) - \sum_{j=1}^m b_j\, w(x_j)^2\right], \quad u = v + w \in N \oplus M
\end{equation*}
be the functional associated with \eqref{3.4}.

\begin{lemma} \label{Lemma 3.1}
If \eqref{3.2}, \eqref{3.3}, and \eqref{3.5} hold, and $b \not\in B$, then zero is an isolated critical point of $\Phi$ and
\begin{equation*}
    C_q(\Phi,0) \isom C_q(\Phi_0,0) \quad \forall q.
\end{equation*}
\end{lemma}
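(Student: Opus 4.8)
The plan is to split $\Phi$ into the quadratic functional $\Phi_0$ plus a higher-order remainder and then deform the remainder away, using that $b \notin B$ makes the gradient of $\Phi_0$ an isomorphism. First I would expand the primitives near zero: by \eqref{3.3} and \eqref{3.5}, $I_j(t) = \half\, b_j\, t^2 + H_j(t)$ with $H_j(t) = \o(t^2)$ \as $t \to 0$, and by \eqref{3.2}, $F(x,t) = \o(t^2)$ \as $t \to 0$, \unifae. Since $u(x_j) = w(x_j)$ for $u = v + w \in N \oplus M$, substituting into the definition of $\Phi$ gives $\Phi = \Phi_0 + \Psi$, where
\[
    \Psi(u) = - \int_0^1 F(x,u) - \sum_{j=1}^m H_j(w(x_j)).
\]
Using the continuous imbedding $H \hookrightarrow C[0,1]$ together with the smallness of $F$ and of the $H_j$, one verifies that $\Psi \in C^1(H,\R)$, that $\norm{\Psi'(u)} = \o(\norm{u})$ \as $u \to 0$, and that $\Psi'$ is compact (its $F$-part factors through the compact imbedding, and the $H_j$-terms have finite rank).

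Next I would identify the gradient of the quadratic part. Since $u(x_j) = \ip{u}{w_j}$, we have $\Phi_0'(u) = Lu$ with $L = I - K$, where $Ku = \sum_{j=1}^m b_j\, \ip{u}{w_j}\, w_j$ is self-adjoint and of finite rank, hence compact. The operator $L$ is the identity on $N$ and carries $M$ into itself, so its Morse index is at most $m$, in particular finite. Its kernel is exactly the set of weak solutions of \eqref{3.4}, so $b \notin B$ says precisely that $L$ is injective; being a compact perturbation of the identity, $L$ is then an isomorphism, and $\norm{Lu} \ge c\, \norm{u}$ for all $u$ and some $c > 0$.

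Then I would run the deformation. Consider the family $\Phi_s = \Phi_0 + s\, \Psi$, $s \in [0,1]$, for which $0$ is a common critical point. Since $\norm{\Psi'(u)} = \o(\norm{u})$, there is $\rho > 0$ with $\norm{\Psi'(u)} \le \frac{c}{2}\, \norm{u}$ whenever $\norm{u} \le \rho$, whence
\[
    \norm{\Phi_s'(u)} \ge \norm{Lu} - s\, \norm{\Psi'(u)} \ge \frac{c}{2}\, \norm{u}, \quad \norm{u} \le \rho,\ s \in [0,1].
\]
So $0$ is the only critical point of each $\Phi_s$ in $B_\rho = \set{u \in H : \norm{u} \le \rho}$, uniformly in $s$; taking $s = 1$ shows in particular that zero is an isolated critical point of $\Phi$. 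The homotopy invariance of critical groups along the uniformly-isolating family $\set{\Phi_s}$ then yields $C_q(\Phi,0) = C_q(\Phi_1,0) \isom C_q(\Phi_0,0)$ for all $q$.

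The hard part will be justifying this homotopy invariance in the infinite-dimensional space $H$. The uniform lower bound on $\norm{\Phi_s'(u)}$ keeps critical points away from $\partial B_\rho$, but turning it into an isomorphism of the local homology pairs in \eqref{3.1} needs a controlled negative pseudo-gradient deformation of the relevant sublevel sets. This is where the compactness of $\Psi'$ enters: each $\Phi_s$ is a nondegenerate quadratic form perturbed by a compact term, so the associated flows are compact perturbations of the linear flow of $L$, the \PS{}-type conditions needed for the deformation lemmas hold on $B_\rho$, and the critical groups remain constant in $s$.
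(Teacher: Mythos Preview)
Your proof is correct and uses the same linear homotopy $\Phi_s = \Phi_0 + s\,\Psi$ as the paper (the paper writes it as $(1-\tau)\,\Phi + \tau\,\Phi_0$, which is the same family with $s = 1-\tau$), and invokes the same homotopy-invariance result for critical groups. The difference is in how you establish that zero is the only critical point of every $\Phi_s$ in a fixed ball. The paper argues by contradiction: assuming a sequence of nontrivial critical points $u_n \to 0$, it rescales $\widetilde u_n = u_n/\norm{u_n}$, passes to a weak limit, and shows that limit is a nontrivial solution of \eqref{3.4}, contradicting $b \notin B$. You instead observe directly that $\Phi_0' = L = I - K$ with $K$ of finite rank, read $b \notin B$ as $\ker L = 0$, use the Fredholm alternative to get $\norm{Lu} \ge c\,\norm{u}$, and combine this with $\norm{\Psi'(u)} = \o(\norm{u})$ for a quantitative lower bound on $\norm{\Phi_s'}$. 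Your route is a bit more structural and yields an explicit isolation radius (and, as you note, immediately bounds the Morse index by $m$, anticipating what the paper does in the next two lemmas); the paper's blow-up argument is more self-contained in that it avoids invoking Fredholm theory. Both are valid; the only place your write-up is slightly soft is the final paragraph on justifying the homotopy invariance, but your compactness remarks about $\Psi'$ are enough to put you squarely inside the hypotheses of the Chang--Ghoussoub / Corvellec--Hantoute theorem the paper cites.
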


\begin{proof}
Recall that critical groups are invariant under homotopies that preserve the isolatedness of the critical point (see Chang and Ghoussoub \cite{MR1422006} or Corvellec and Hantoute \cite{MR1926378}). Consider the homotopy
\begin{eqnarray*}
    \Phi_\tau(u) & = & (1 - \tau)\, \Phi(u) + \tau\, \Phi_0(u)\\[10pt]
    & = & \half \left[\int_0^1 (u')^2 - \sum_{j=1}^m b_j\, u(x_j)^2\right] - (1 - \tau)\, \Bigg[\int_0^1 F(x,u)\\[10pt]
    & & + \sum_{j=1}^m H_j(u(x_j))\Bigg], \quad u \in H,\, \tau \in [0,1]
\end{eqnarray*}
where $H_j(t) = \dint_0^t h_j(s)\, ds$. We will show that zero is the only critical point of $\Phi_\tau$ for all $\tau \in [0,1]$ in a sufficiently small neighborhood.

If not, there are sequences $\seq{\tau_n} \subset [0,1]$ and $\seq{u_n} \subset H \setminus \set{0}$ such that $\Phi_{\tau_n}'(u_n) = 0$ and $\rho_n := \norm{u_n} \to 0$. So, for all $y \in H$,
\begin{multline*}
    \int_0^1 u_n'\, y' - \sum_{j=1}^m b_j\, u_n(x_j)\, y(x_j) - (1 - \tau_n)\, \Bigg[\int_0^1 f(x,u_n)\, y\\[10pt]
    + \sum_{j=1}^m h_j(u_n(x_j))\, y(x_j)\Bigg] = 0.
\end{multline*}
Dividing by $\rho_n$, setting $\widetilde{u}_n := u_n/\rho_n$, and using \eqref{3.2} and \eqref{3.5} gives
\begin{equation} \label{3.6}
    \int_0^1 \widetilde{u}_n'\, y' - \sum_{j=1}^m b_j\, \widetilde{u}_n(x_j)\, y(x_j) = \o(1).
\end{equation}
Since $\seq{\widetilde{u}_n}$ is bounded in $H$, a renamed subsequence converges to some $\widetilde{u}$ weakly in $H$ and uniformly on $[0,1]$, so passing to the limit in \eqref{3.6} gives
\begin{equation*}
    \int_0^1 \widetilde{u}' y' - \sum_{j=1}^m b_j\, \widetilde{u}(x_j)\, y(x_j) = 0.
\end{equation*}
Taking $y = \widetilde{u}_n$ in \eqref{3.6}, using $\norm{\widetilde{u}_n} = 1$, and passing to the limit gives
\begin{equation*}
    \sum_{j=1}^m b_j\, \widetilde{u}(x_j)^2 = 1,
\end{equation*}
so $\widetilde{u} \ne 0$. Thus, $\widetilde{u}$ is a nontrivial solution of \eqref{3.4}, contradicting the assumption that $b \not\in B$.
\end{proof}

Next we show that the critical groups of $\Phi_0$ are the same as those of its restriction to the finite dimensional subspace $M$. Set $\Phi_b := \restr{\Phi_0}{M}$, so
\begin{equation*}
    \Phi_b(w) = \half \left[\int_0^1 (w')^2 - \sum_{j=1}^m b_j\, w(x_j)^2\right], \quad w \in M.
\end{equation*}

\begin{lemma} \label{Lemma 3.2}
We have
\begin{equation*}
    C_q(\Phi_0,0) \isom C_q(\Phi_b,0) \quad \forall q.
\end{equation*}
\end{lemma}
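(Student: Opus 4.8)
The plan is to exploit the orthogonal splitting $H = N \oplus M$ directly. Since every $v \in N$ vanishes at each $x_j$, writing $u = v + w \in N \oplus M$ gives the honest direct sum
\[
    \Phi_0(u) = \half \norm{v}^2 + \Phi_b(w),
\]
in which the $N$-component is the positive definite form $\half \norm{v}^2$ while the restriction of $\Phi_0$ to $M$ is exactly $\Phi_b$. I would therefore deform the $N$-component away without disturbing anything in $M$, and then read off the critical groups from the finite dimensional pair living in $M$.

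Concretely, I would introduce the deformation $\eta(t, v + w) = (1 - t)\, v + w$, $t \in [0,1]$, and show it is a strong deformation retraction of the pair $\seq{\Phi_0^0 \cap U,\, \Phi_0^0 \cap U \setminus \set{0}}$ onto $\seq{\Phi_0^0 \cap U \cap M,\, \Phi_0^0 \cap U \cap M \setminus \set{0}}$, where $U$ is a small ball about the origin. Three properties need checking. First, $\Phi_0(\eta(t, u)) = \half (1 - t)^2 \norm{v}^2 + \Phi_b(w) \le \Phi_0(u)$, so $\eta$ preserves the sublevel set $\Phi_0^0$. Second, by orthogonality of $N$ and $M$, $\norm{\eta(t, u)}^2 = (1 - t)^2 \norm{v}^2 + \norm{w}^2 \le \norm{u}^2$, so $\eta$ is norm-nonincreasing and hence maps $U$ into itself. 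Third, $\eta$ must avoid the origin away from the origin, and this is the step I expect to carry the argument.

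For that last point the key observation is that inside $\Phi_0^0$ the $M$-component already detects the origin: if $u = v + w \in \Phi_0^0$ has $w = 0$, then $\Phi_0(u) = \half \norm{v}^2 \le 0$ forces $v = 0$, so $u = 0$. Consequently every $u \ne 0$ in $\Phi_0^0$ has $w \ne 0$, and since $\eta(t, u) = (1 - t)\, v + w$ keeps the $M$-component $w$ unchanged, it never returns to $0$. The same computation shows $\eta(1, v + w) = w \in \Phi_b^0$ and that points of $M$ stay fixed throughout, so $\eta$ is indeed a retraction onto the pair in $M$.

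Finally I would assemble these. Homotopy invariance of relative singular homology converts the deformation retraction into
\[
    C_q(\Phi_0, 0) = H_q\seq{\Phi_0^0 \cap U,\, \Phi_0^0 \cap U \setminus \set{0}} \isom H_q\seq{\Phi_0^0 \cap U \cap M,\, \Phi_0^0 \cap U \cap M \setminus \set{0}}.
\]
Since $\Phi_0$ and $\Phi_b$ agree on $M$, the right-hand pair is precisely $\seq{\Phi_b^0 \cap (U \cap M),\, \Phi_b^0 \cap (U \cap M) \setminus \set{0}}$, whose relative homology is $C_q(\Phi_b, 0)$ because $U \cap M$ is a neighborhood of $0$ in $M$. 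The one bookkeeping subtlety is to justify that the single ball $U$ is admissible for computing \emph{both} critical groups; this rests on $b \not\in B$, which guarantees (as in Lemma \ref{Lemma 3.1}) that $0$ is the unique critical point of each of $\Phi_0$ and $\Phi_b$, so the critical groups are independent of the neighborhood and the matched choice $U \cap M$ is legitimate.
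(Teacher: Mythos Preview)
Your proposal is correct and follows essentially the same approach as the paper: the paper uses the identical deformation $\eta(u,t) = (1-t)\,v + w$ and the same key observation that $\Phi_0(u) = \half\norm{v}^2 + \Phi_b(w)$, so that $\eta$ is a strong deformation retraction of the pair $(\Phi_0^0,\Phi_0^0\setminus\set{0})$ onto $(\Phi_b^0,\Phi_b^0\setminus\set{0})$. The only cosmetic difference is that the paper takes $U = H$ outright (which is legitimate since $0$ is the unique critical point of $\Phi_0$), thereby avoiding your extra bookkeeping step of checking that $\eta$ is norm-nonincreasing and stays inside a ball; your explicit verification that $\eta$ never hits the origin on $\Phi_0^0\setminus\set{0}$ is something the paper leaves implicit.
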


\begin{proof}
Taking $U = H$ in the definition \eqref{3.1} for $\Phi_0$ gives
\begin{equation*}
    C_q(\Phi_0,0) = H_q(\Phi_0^0,\Phi_0^0 \setminus \set{0}).
\end{equation*}
Consider the deformation
\begin{equation*}
    \eta(u,t) = (1 - t)\, v + w, \quad u = v + w \in N \oplus M,\, t \in [0,1].
\end{equation*}
We have
\begin{equation*}
    \Phi_0(\eta(u,t)) = \half \left[\int_0^1 \left((1 - t)^2\, (v')^2 + (w')^2\right) - \sum_{j=1}^m b_j\, w(x_j)^2\right] \le \Phi_0(u),
\end{equation*}
so $\restr{\eta}{\Phi_0^0 \times [0,1]}$ (resp. $\restr{\eta}{(\Phi_0^0 \setminus \set{0}) \times [0,1]}$) is a strong deformation retraction of $\Phi_0^0$ (resp. $\Phi_0^0 \setminus \set{0}$) onto $\Phi_0^0 \cap M = \Phi_b^0$ (resp. $(\Phi_0^0 \setminus \set{0}) \cap M = \Phi_b^0 \setminus \set{0}$). Thus,
\begin{equation*}
    C_q(\Phi_0,0) \isom H_q(\Phi_b^0,\Phi_b^0 \setminus \set{0}) = C_q(\Phi_b,0). \QED
\end{equation*}
\end{proof}

The functional $\Phi_b$ is of class $C^2$, and its Hessian at zero is given by
\begin{equation*}
    \sip{\Phi_b''(0)\, y}{z} = \int_0^1 y' z' - \sum_{j=1}^m b_j\, y(x_j)\, z(x_j), \quad y, z \in M.
\end{equation*}
So the assumption that problem \eqref{3.4} has only the trivial solution implies that zero is a nondegenerate critical point of $\Phi_b$. Let $m_0$ denote its Morse index. Since $\dim M = m$, $0 \le m_0 \le m$. With respect to the basis $\set{w_1,\dots,w_m}$ of $M$, $\Phi_b''(0)$ is represented by the $m \times m$ matrix $\big(\sip{\Phi_b''(0)\, w_j}{w_k}\big)$, which is symmetric and nonsingular, and $m_0$ is the number of negative eigenvalues of this matrix. Combining this with Lemmas \ref{Lemma 3.1} and \ref{Lemma 3.2} now gives

\begin{theorem} \label{Theorem 3.3}
If \eqref{3.2}, \eqref{3.3}, and \eqref{3.5} hold, and $b \not\in B$, then
\begin{equation*}
    C_q(\Phi,0) = \delta_{qm_0}\, \calG,
\end{equation*}
where $\calG$ is the coefficient group. In particular, $C_q(\Phi,0) = 0$ for all $q > m$.
\end{theorem}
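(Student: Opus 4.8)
The plan is to chain the two isomorphisms already in hand and then compute the critical groups of the resulting finite-dimensional quadratic form by an elementary deformation argument. By Lemmas \ref{Lemma 3.1} and \ref{Lemma 3.2}, under the stated hypotheses zero is an isolated critical point of $\Phi$ and
\[
    C_q(\Phi,0) \isom C_q(\Phi_0,0) \isom C_q(\Phi_b,0) \quad \forall q,
\]
so it suffices to show $C_q(\Phi_b,0) = \delta_{qm_0}\, \calG$. Since $\Phi_b(w) = \half\, \sip{\Phi_b''(0)\, w}{w}$ is genuinely a nondegenerate quadratic form on the $m$-dimensional space $M$ (there is no higher-order remainder, so no Morse lemma is needed), this is an instance of the classical fact that the critical groups of a nondegenerate critical point are concentrated in the single degree equal to its Morse index. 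I would either invoke this directly (Gromoll--Meyer, or Mawhin--Willem) or, since the quadratic case is completely elementary, prove it by hand as follows.

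First I would diagonalize the symmetric nonsingular matrix representing $\Phi_b''(0)$ and split $M = M^- \oplus M^+$ into its negative and positive eigenspaces, with $\dim M^- = m_0$ and $\dim M^+ = m - m_0$. After rescaling the eigencoordinates I may assume $\Phi_b(w) = \half\big(\norm{w^+}^2 - \norm{w^-}^2\big)$ for $w = w^- + w^+$, so that $\Phi_b^0 = \set{w \in M : \norm{w^+} \le \norm{w^-}}$. The next step is the strong deformation retraction $\eta(w,t) = w^- + (1-t)\, w^+$, which maps $\Phi_b^0$ into itself (because $\norm{(1-t)\, w^+} \le \norm{w^+} \le \norm{w^-}$) and retracts it onto $M^-$; since any $w \in \Phi_b^0 \setminus \set{0}$ must have $w^- \ne 0$, the same map carries $\Phi_b^0 \setminus \set{0}$ onto $M^- \setminus \set{0}$. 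Hence
\[
    C_q(\Phi_b,0) = H_q\big(\Phi_b^0,\, \Phi_b^0 \setminus \set{0}\big) \isom H_q\big(M^-,\, M^- \setminus \set{0}\big).
\]

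Finally, $M^-$ is an $m_0$-dimensional vector space, so its local homology at the origin is $H_q(M^-, M^- \setminus \set{0}) = \delta_{qm_0}\, \calG$ --- for $m_0 \ge 1$ this is the reduced homology of $S^{m_0-1}$ shifted up one degree, and the degenerate case $m_0 = 0$ gives $\delta_{q0}\, \calG$. This yields the stated formula, and $C_q(\Phi,0) = 0$ for $q > m$ follows immediately from $m_0 \le \dim M = m$. I anticipate no genuine obstacle at this stage: Lemmas \ref{Lemma 3.1} and \ref{Lemma 3.2} have already absorbed the analytic difficulties --- the removal of the higher-order terms of $f$ and $h_j$ and the collapse of the infinite-dimensional factor $N$ --- leaving only a textbook finite-dimensional Morse computation. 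The only points that require a little care are verifying that the linear retraction genuinely preserves both $\Phi_b^0$ and $\Phi_b^0 \setminus \set{0}$, and treating the boundary cases $m_0 = 0$ and $m_0 = m$ correctly in the Euclidean local-homology step.
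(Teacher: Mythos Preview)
Your proposal is correct and follows essentially the same route as the paper: the paper combines Lemmas \ref{Lemma 3.1} and \ref{Lemma 3.2} to reduce to $C_q(\Phi_b,0)$, observes that zero is a nondegenerate critical point of the $C^2$ functional $\Phi_b$ on the $m$-dimensional space $M$ with Morse index $m_0$, and then simply invokes the classical Morse-theoretic identification $C_q(\Phi_b,0)=\delta_{qm_0}\,\calG$ without further comment. Your version differs only in that you spell out this last step explicitly via the diagonalization and linear retraction onto $M^-$, which is a welcome elaboration rather than a different strategy.
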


We close this section with the observation that the critical groups of $\Phi_b$ are constant in each path-component of $\R^m \setminus B$. Indeed, if $p \in C([0,1],\R^m \setminus B)$, take any bounded neighborhood $U$ of $0$ in $M$ and
consider the homotopy
\begin{equation*}
    [0,1] \to C^1(U), \quad t \mapsto \restr{\Phi_{p(t)}}{U}.
\end{equation*}
Since zero is the only critical point of $\Phi_{p(t)}$ for all $t \in [0,1]$, it follows that $C_\ast(\Phi_{p(t)},0)$ are independent of $t$.

\section{An Application}

In this section we give an application of Theorem \ref{Theorem 3.3}.

\begin{theorem}
Assume that \eqref{2.1} -- \eqref{2.3}, \eqref{3.2}, \eqref{3.3}, and \eqref{3.5} hold, $a_j \notin \sigma_j$ for $j = 1,\dots,m+1$, and $b \not\in B$. If
\begin{equation} \label{4.1}
a_{j_0} > \lambda^{j_0}_1
\end{equation}
for some $j_0$, or
\begin{equation} \label{4.2}
    \int_0^1 (w_0')^2 \ge \sum_{j=1}^m b_j\, w_0(x_j)^2
\end{equation}
for some $w_0 \in M \setminus \set{0}$, then problem \eqref{1.1} has a nontrivial solution.
\end{theorem}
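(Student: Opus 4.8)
The plan is to produce the nontrivial solution by a Morse-theoretic comparison of the critical groups of $\Phi$ at zero with those at infinity. Since $f(\cdot,0) = 0$ and $\imath_j(0) = 0$, the point $u = 0$ is a critical point of $\Phi$, and under the standing hypotheses Theorem \ref{Theorem 3.3} gives
\[
C_q(\Phi,0) = \delta_{qm_0}\,\calG, \qquad 0 \le m_0 \le m,
\]
where $m_0$ is the Morse index of the Hessian $\Phi_b''(0)$ on $M$. By Lemma \ref{Lemma 2.2}, $\Phi$ satisfies the \PS{} condition, so it suffices to exhibit a single degree $q$ in which $C_q(\Phi,\infty) \ne C_q(\Phi,0)$: a critical point $u \ne 0$ must then exist, since otherwise $0$ would be the only critical point of $\Phi$ and we would have $C_q(\Phi,\infty) \isom C_q(\Phi,0)$ for all $q$.

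First I would compute the critical groups at infinity using the saddle splitting $H = H_1 \oplus H_2$ from the proof of Theorem \ref{Theorem 2.1}, in which $H_1 = \bigoplus_{j \in J_1} N_j^- \oplus M$ is finite dimensional with
\[
d := \dim H_1 = m + \sum_{j \in J_1} d_j,
\]
$\Phi(u) \to -\infty$ as $\norm{u} \to \infty$ in $H_1$, and $\Phi$ is bounded below on $H_2$. Lemma \ref{Lemma 2.2} in fact shows that every sequence with $\Phi'(u_n) \to 0$ subconverges, so the critical set of $\Phi$ is compact and its image under $\Phi$ is bounded below; hence $C_q(\Phi,\infty) = H_q(H,\Phi^a)$ is well defined for $a$ below all critical values. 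For such $a$ the sublevel set $\Phi^a$ misses $H_2$, and by the standard negative-pseudogradient deformation for saddle geometry, $\Phi^a$ deformation retracts onto $H_1 \setminus B_R$ for large $R$. Since $H$ is contractible and $H_1 \setminus B_R$ is homotopy equivalent to a $(d-1)$-sphere, the long exact sequence of the pair yields
\[
C_q(\Phi,\infty) = H_q(H, H_1 \setminus B_R) \isom \delta_{qd}\,\calG.
\]

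It then remains to check that each hypothesis forces $d \ne m_0$. If \eqref{4.1} holds, then $a_{j_0} > \lambda^{j_0}_1$ puts $j_0 \in J_1$ with $d_{j_0} \ge 1$, so $d \ge m + 1 > m \ge m_0$. If instead \eqref{4.2} holds, then $\sip{\Phi_b''(0)\,w_0}{w_0} = \int_0^1 (w_0')^2 - \sum_{j=1}^m b_j\, w_0(x_j)^2 \ge 0$ for some $w_0 \ne 0$, so the symmetric nonsingular form $\Phi_b''(0)$ cannot be negative definite on the $m$-dimensional space $M$; hence $m_0 \le m - 1 < m \le d$. In either case $d > m_0$, so $C_d(\Phi,\infty) = \calG \ne 0 = C_d(\Phi,0)$, and therefore $\Phi$ has a critical point $u \ne 0$, which is the desired nontrivial solution of \eqref{1.1}.

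The main obstacle is the computation of $C_q(\Phi,\infty)$: one must verify that the saddle geometry of $\Phi$ relative to $H_1 \oplus H_2$, together with the compactness supplied by the strong form of Lemma \ref{Lemma 2.2}, deforms $\Phi^a$ onto a set homotopy equivalent to a $(d-1)$-sphere in $H_1$, so that the critical groups at infinity are concentrated in degree $d = \dim H_1$. The remaining arithmetic comparing $d$ with $m_0$ is routine once the nondegeneracy coming from $b \not\in B$ is used to exclude $m_0 = m$ under \eqref{4.2}.
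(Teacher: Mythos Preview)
Your argument is correct and the arithmetic comparing $d = \dim H_1$ with $m_0$ is exactly the paper's. The route differs slightly: the paper invokes the homological refinement of the saddle point theorem directly, obtaining a critical point $u$ with $C_k(\Phi,u) \ne 0$ for $k = \dim H_1$, and then observes from Theorem \ref{Theorem 3.3} that $C_k(\Phi,0) = 0$ since $k > m_0$. You instead compute $C_\ast(\Phi,\infty)$ from the saddle geometry and compare with $C_\ast(\Phi,0)$. These are close cousins---your computation of $C_\ast(\Phi,\infty)$ is essentially what underlies the homological saddle point theorem---so the paper's version is shorter by quoting that result as a black box, while yours is more self-contained. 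One small point: to get the deformation of $\Phi^a$ onto a $(d-1)$-sphere in $H_1$ you want more than ``bounded below on $H_2$''; the estimates in the proof of Theorem \ref{Theorem 2.1} actually give coercivity of $\Phi$ on $H_2$, which is what makes the standard Bartsch--Li type argument go through cleanly.
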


\begin{proof}
In the proof of Theorem \ref{Theorem 2.1}, the saddle point theorem actually gives a critical point $u$ with $C_k(\Phi,u) \ne 0$ where
\begin{equation*}
    k = \dim H_1 = \sum_{j \in J_1} \dim N_j^- + \dim M = \sum_{j \in J_1} d_j + m.
\end{equation*}
If \eqref{4.1} holds, then $j_0 \in J_1$ and hence $k \ge d_{j_0} + m > m$, and if \eqref{4.2} holds, then $\sip{\Phi_b''(0)\, w_0}{w_0} \ge 0$ and hence $m_0 < m \le k$. In either case, $C_k(\Phi,0) = 0$ by Theorem \ref{Theorem 3.3}, so $u \ne 0$.
\end{proof}

\begin{corollary}
Assume that \eqref{2.1} -- \eqref{2.3}, \eqref{3.2}, \eqref{3.3}, and \eqref{3.5} hold, $a_j \notin \sigma_j$ for $j = 1,\dots,m+1$, and $b \not\in B$. If
\begin{equation} \label{4.3}
b_{j_0} \le \frac{x_{j_0 + 1} - x_{j_0 - 1}}{(x_{j_0 + 1} - x_{j_0})(x_{j_0} - x_{j_0 - 1})}
\end{equation}
for some $j_0$, then problem \eqref{1.1} has a nontrivial solution.
\end{corollary}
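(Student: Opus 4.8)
The plan is to deduce the corollary directly from the preceding theorem by verifying its hypothesis \eqref{4.2} for a suitable test function. Since every standing assumption of that theorem is already in force here, it suffices to exhibit some $w_0 \in M \setminus \set{0}$ satisfying $\int_0^1 (w_0')^2 \ge \sum_{j=1}^m b_j\, w_0(x_j)^2$; the theorem then produces the nontrivial solution with no further work.

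The crucial decision is the choice of $w_0$. I would \emph{not} use the Riesz representative $w_{j_0}$ of \eqref{1.2}, because $w_{j_0}(x_j) \ne 0$ for $j \ne j_0$ would couple all the coefficients $b_j$ on the right-hand side. Instead, recalling that $M$ is precisely the space of continuous functions that are affine on each $[x_{j-1},x_j]$ and vanish at $0$ and $1$, I would take $w_0 \in M$ to be the nodal ``hat'' function determined by $w_0(x_j) = \delta_{jj_0}$: it rises linearly from $0$ at $x_{j_0-1}$ to $1$ at $x_{j_0}$, falls linearly back to $0$ at $x_{j_0+1}$, and vanishes on the remaining subintervals. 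Then $w_0 \ne 0$, and the right-hand side of \eqref{4.2} collapses to the single term $\sum_{j=1}^m b_j\, w_0(x_j)^2 = b_{j_0}$.

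The remaining step is the elementary Dirichlet-energy computation. The slope of $w_0$ is $1/(x_{j_0}-x_{j_0-1})$ on $[x_{j_0-1},x_{j_0}]$ and $-1/(x_{j_0+1}-x_{j_0})$ on $[x_{j_0},x_{j_0+1}]$, with $w_0' = 0$ elsewhere, so
\[
    \int_0^1 (w_0')^2 = \frac{1}{x_{j_0}-x_{j_0-1}} + \frac{1}{x_{j_0+1}-x_{j_0}} = \frac{x_{j_0+1}-x_{j_0-1}}{(x_{j_0+1}-x_{j_0})(x_{j_0}-x_{j_0-1})}.
\]
Combining the two computations, condition \eqref{4.2} for this $w_0$ reads exactly $\frac{x_{j_0+1}-x_{j_0-1}}{(x_{j_0+1}-x_{j_0})(x_{j_0}-x_{j_0-1})} \ge b_{j_0}$, which is the hypothesis \eqref{4.3}. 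Thus \eqref{4.2} is satisfied and the theorem applies.

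There is no genuine obstacle once the hat function is identified; the entire content is the observation that this test function decouples the quadratic form and isolates the $j_0$-th diagonal entry of $\Phi_b''(0)$. The only point warranting a quick check is the boundary cases $j_0 = 1$ and $j_0 = m$, where $x_{j_0-1} = x_0 = 0$ or $x_{j_0+1} = x_{m+1} = 1$; since these endpoints are genuine nodes of the partition, the hat function and the energy formula above remain valid verbatim, so the argument is uniform across all admissible $j_0$.
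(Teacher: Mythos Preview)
Your proof is correct and follows exactly the paper's approach: the paper's one-line proof is simply ``Take $w_0$ to be the function in $M$ for which $w_0(x_j) = \delta_{jj_0}$,'' and you have filled in the implicit computations verifying \eqref{4.2} for this choice.
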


\begin{proof}
Take $w_0$ to be the function in $M$ for which $w_0(x_j) = \delta_{jj_0}$.
\end{proof}

When the points $x_j$ are equally spaced, $\lambda^j_k = k^2\, (m + 1)^2\, \pi^2 =: \lambda_k$ and $\sigma_j = \set{\lambda_k : k = 1,2,\dots} =: \sigma$ for all $j$, and the right-hand side of \eqref{4.3} reduces to $2\, (m + 1)$, so we have

\begin{corollary}
Let $x_j = j/(m + 1),\, j = 1,\dots,m$ and assume that \eqref{2.1} -- \eqref{2.3}, \eqref{3.2}, \eqref{3.3}, and \eqref{3.5} hold, $a_j \notin \sigma$ for $j = 1,\dots,m+1$, and $b \not\in B$. If
\begin{equation*}
\max_j\, a_j > (m + 1)^2\, \pi^2,
\end{equation*}
or
\begin{equation*}
\min_j\, b_j \le 2\, (m + 1),
\end{equation*}
then problem \eqref{1.1} has a nontrivial solution.
\end{corollary}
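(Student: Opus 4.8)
The plan is to recognize this corollary as the equally-spaced specialization of the two results that immediately precede it, so that each of the two alternative hypotheses reduces, by a direct length computation, to a hypothesis already shown to guarantee a nontrivial solution. First I would record the two arithmetic facts valid when $x_j = j/(m+1)$: each subinterval $[x_{j-1},x_j]$ has length $1/(m+1)$, so
\begin{equation*}
    \lambda^j_1 = \frac{\pi^2}{(x_j - x_{j-1})^2} = (m + 1)^2\, \pi^2
\end{equation*}
for every $j$, and the right-hand side of \eqref{4.3} becomes
\begin{equation*}
    \frac{x_{j+1} - x_{j-1}}{(x_{j+1} - x_j)(x_j - x_{j-1})} = \frac{2/(m+1)}{1/(m+1)^2} = 2\,(m+1),
\end{equation*}
again independently of $j$. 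Both of these are already noted in the paragraph preceding the statement.

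Next I would treat the first alternative. If $\max_j a_j > (m + 1)^2\, \pi^2$, then the index $j_0$ at which the maximum is attained satisfies $a_{j_0} > (m + 1)^2\, \pi^2 = \lambda^{j_0}_1$, which is precisely condition \eqref{4.1}. Since the remaining standing hypotheses are exactly those assumed here --- namely \eqref{2.1}--\eqref{2.3}, \eqref{3.2}, \eqref{3.3}, \eqref{3.5}, $a_j \notin \sigma$ for all $j$, and $b \notin B$ --- the Theorem of this section whose hypotheses are \eqref{4.1} and \eqref{4.2} applies and yields a nontrivial solution.

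Then I would treat the second alternative. If $\min_j b_j \le 2\,(m+1)$, then the minimizing index $j_0$ satisfies $b_{j_0} \le 2\,(m+1)$, which by the computation above is exactly condition \eqref{4.3}. The Corollary of this section whose hypothesis is \eqref{4.3} then applies under the same standing hypotheses and produces a nontrivial solution.

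The content of the statement is therefore entirely contained in the two earlier results, and there is no substantive obstacle; the only point requiring a moment's care is confirming that both $\lambda^j_1$ and the bound in \eqref{4.3} are genuinely independent of $j$ once the nodes are equally spaced. That independence is what allows the single scalar conditions $\max_j a_j > (m + 1)^2\, \pi^2$ and $\min_j b_j \le 2\,(m+1)$ to correctly encode the existence of a single index $j_0$ verifying \eqref{4.1} or \eqref{4.3}, respectively.
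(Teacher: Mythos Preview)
Your proposal is correct and matches the paper's approach exactly: the paper simply records, in the paragraph preceding the statement, that for equally spaced nodes $\lambda^j_1 = (m+1)^2\pi^2$ and the right-hand side of \eqref{4.3} equals $2(m+1)$, and then states the corollary without further proof. Your argument spells out precisely this reduction to the preceding Theorem (via \eqref{4.1}) and Corollary (via \eqref{4.3}).
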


We close with an example.

\begin{example}
Our results apply to the problem
\begin{equation*}
    \left\{\begin{aligned}
    - u'' & = \sum_{j=1}^{m+1} a_j\, \chi_j(x)\, \frac{u^3 + u^2}{u^2 + 1}, \quad x \in (0,1) \setminus \set{x_1,\dots,x_m}\\[10pt]
    u(0) & = u(1) = 0, \qquad u(x_j^+) = u(x_j^-), \quad j = 1,\dots,m\\[10pt]
    u'(x_j^+) & = u'(x_j^-) - u^3(x_j) - u^2(x_j) - b_j\, u(x_j), \quad j = 1,\dots,m.
    \end{aligned}\right.
\end{equation*}
\end{example}

{\small \def\cdprime{$''$}
}

\end{document}